\documentclass{amsart}
\usepackage[utf8]{inputenc}
\usepackage[utf8]{inputenc}
\usepackage[T1]{fontenc}
\usepackage[all]{xy}
\usepackage{lipsum}
\usepackage{url}
\usepackage{stackrel}

\usepackage{amsmath,amsthm,amssymb}

\usepackage{graphicx}


\newtheorem{thm}{Theorem}[section]

\newtheorem{prop}[thm]{Proposition}
\theoremstyle{definition}

\newtheorem{rem}[thm]{Remark}
\newtheorem{cor}[thm]{Corollary}
\numberwithin{equation}{section}

\begin{document}
\vspace{0.5in}

\renewcommand{\bf}{\bfseries}
\renewcommand{\sc}{\scshape}
\vspace{0.5in}

\title[COLLISION-FREE MOTION PLANNING ON MANIFOLDS WITH BOUNDARY]%
{COLLISION-FREE MOTION PLANNING ON MANIFOLDS WITH BOUNDARY \\}

\author{Cesar A. Ipanaque Zapata }
\date{} 
\address[Cesar A. Ipanaque. Zapata]{Deparatmento de Matem\'{a}tica,UNIVERSIDADE DE S\~{A}O PAULO
INSTITUTO DE CI\^{E}NCIAS MATEM\'{A}TICAS E DE COMPUTA\c{C}\~{A}O -
USP , Avenida Trabalhador S\~{a}o-carlense, 400 - Centro CEP:
13566-590 - S\~{a}o Carlos - SP, Brasil}
\email{cesarzapata@usp.br}


\subjclass[2010]{Primary 55R80, 55P10, 57N37; Secondary 57N65}                                    %

\keywords{Ordered configuration spaces, Fadell and Neuwirth's fibration, manifolds, homotopy fiber, sphere world.}
\thanks {The author wishes to acknowledge support for this research, from FAPESP 2016/18714-8.}

\begin{abstract}

This paper concerns the study of the homotopy type of the ordered configuration space for manifolds with boundary and as an application we will study the collision free motion planning problem on manifolds with boundary.
   
\end{abstract}

\maketitle

\section{\bf Introduction}
Consider the following problem in robotics. Assume, for instance, that we have $k$ objects (robots) moving in $\mathbb{D}^n:=\{x\in \mathbb{R}^n\mid ~\|x\|\leq 1\}$ with no collisions and avoiding the obstacles whose geometry is prescribed in advance. The motion planning problem consists in constructing an algorithm, which produces continuous paths in $\mathbb{D}^n$ to transport the objects from an initial configuration to final configuration such that in the process of motion there occur no collision between the objects and such that the objects do not touch the obstacles in the process of motion.

Michael Farber \cite{farber2003topological} gave the topological approach to the robot motion planning problem when the configuration space of the system is known in advance. He divided the whole configuration space of the system into pieces (local domains) and prescribed continuous motion over each of the local domains. Thus, Michael Farber defined a numerical invariant $TC(X)$, the topological complexity of the space $X$, as the minimal number of such local domains. Formally,  given a path-connected topological space $X$. The \textit{Topological complexity} of the space $X$ \cite{farber2003topological}, denoted $TC(X)$, is the least integer $m$ such that the Cartesian product $X\times X$ can be covered with $m$ open subsets $U_i$, \begin{equation*}
        X \times X = U_1 \cup U_2 \cup\cdots \cup U_m 
    \end{equation*} such that for any $i = 1, 2, \ldots , m$ there exists a continuous function $s_i : U_i \longrightarrow PX$, $\pi\circ s_i = id$ over $U_i$. If no such $m$ exists we will set $TC(X)=\infty$. Where $PX$ denote the space of all continuous paths $\gamma: [0,1] \longrightarrow X$ in $X$ and  $\pi: PX \longrightarrow X \times X$ denotes the
map associating to any path $\gamma\in PX$ the pair of its initial and end points $\pi(\gamma)=(\gamma(0),\gamma(1))$. Equip the path space $PX$ with the compact-open topology. 

One of the basic properties of $TC(X)$ is its homotopy invariance (\cite{farber2003topological}, Theorem 3).

The ordered configuration space of $k$ distinct points of a topological space $X$ (see \cite{fadell1962configuration}) is the subset 
 \[F(X,k)=\{(x_1,\ldots,x_k)\in X^k\mid ~~x_i\neq x_j\text{ for all } i\neq j \}\] topologised, as a subspace of the cartesian power $X^k$. Clearly, this space can be used in robotics when one controls multiple objects simultaneously, trying to avoid collisions between them \cite{farber2008invitation}. Here $k\geq 1$ is an integer.

In this paper, we make several assumptions: (a) each object is represented by a single point; (b) each obstacle is represented by small balls, possibly of different radii; (c) the obstacles are known in advance; (d) collision between two objects occurs if they are situated at the same point in space; (e) an object touches an obstacle if the point representing the object is situated at the obstacle.

One of our results (Theorem \ref{theor}) shows that the problem becomes topologically equivalent to the similar problem when the objects are restricted to do not touch the boundary in the process of motion.


\section{\bf Configuration spaces of manifolds with boundary}

Throughout this section, we will assume $M$ is a connected and compact topological manifold with nonempty boundary \cite{lee2012introduction}. Furthermore $Int(M)$ denotes the interior of the manifold $M$, that is, $Int(M)=M- \partial M$.

\subsection{Homotopy type}
 
 One principal Theorem which will be proved in this paper is as follow.
 
 \begin{thm}\label{theor} 
 For each $k\geq 1$, the inclusion map $i:Int(M)\hookrightarrow M$ induces homotopy equivalences in the configuration space $F(M,k)$, that is, the map \begin{equation}
 \begin{array}{rccl}
 F(i,k):&F(Int(M),k)&\longrightarrow & F(M,k)\\&(x_1,\ldots,x_k)&\mapsto
 &(x_1,\ldots,x_k)
                           \end{array}
 \end{equation} is a homotopy equivalence. Furthermore, the configuration space $F(M,k)$ is $\Sigma_k-$equivariantly homotopy equivalent to the configuration space $F(Int(M),k)$.
\end{thm}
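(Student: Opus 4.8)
The plan is to prove that the inclusion $Int(M) \hookrightarrow M$ is a homotopy equivalence (in fact a deformation retract) in a way that extends to configuration spaces, while respecting the symmetric group action. The key geometric input is the collar neighborhood theorem: for a compact manifold with boundary, there is a collar $\partial M \times [0,1) \hookrightarrow M$, a homeomorphism onto an open neighborhood of $\partial M$ sending $\partial M \times \{0\}$ to $\partial M$. Using this collar, one can construct an isotopy (ambient deformation) $H_t : M \to M$ that at time $t=0$ is the identity and at time $t=1$ pushes all of $M$ into the interior $Int(M)$, by sliding points inward along the collar lines while fixing points far from the boundary. Such a map $H_1 : M \to Int(M)$ is a homotopy inverse to the inclusion $i$, and $H_t$ provides the homotopies $H_t \circ i \simeq id$ and $i \circ H_1 \simeq id$.

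First I would set up the collar-pushing homotopy carefully. The crucial feature I want is that each $H_t : M \to M$ is \emph{injective} (a homeomorphism onto its image), not merely continuous; this is what allows the homotopy to descend to configuration spaces, since an injective map $f$ satisfies $f(x_i) \neq f(x_j)$ whenever $x_i \neq x_j$. Concretely, on the collar $\partial M \times [0,1)$ I would reparametrize the normal coordinate by a time-dependent homeomorphism of $[0,1)$ onto a subinterval $[\epsilon_t, 1)$ (with $\epsilon_0 = 0$), patched to the identity outside the collar by a suitable cutoff. Each $H_t$ is then a self-embedding of $M$, with $H_0 = id$ and $H_1(M) \subseteq Int(M)$.

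Next I would promote this to configuration spaces. Define $\widetilde{H}_t : F(M,k) \to F(M,k)$ by $\widetilde{H}_t(x_1,\ldots,x_k) = (H_t(x_1),\ldots,H_t(x_k))$. Because each $H_t$ is injective, distinct points map to distinct points, so $\widetilde{H}_t$ is well-defined and continuous in $(t, x_1, \ldots, x_k)$. At $t=1$ the image lands in $F(Int(M),k)$, giving a map $G := \widetilde{H}_1 : F(M,k) \to F(Int(M),k)$. One then checks that $F(i,k) \circ G \simeq id_{F(M,k)}$ via $\widetilde{H}_t$, and that $G \circ F(i,k) \simeq id_{F(Int(M),k)}$ via the restriction of $\widetilde{H}_t$ to $F(Int(M),k)$ (which stays inside $F(Int(M),k)$ for all $t$, since pushing an interior configuration further inward keeps it interior). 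This establishes the homotopy equivalence.

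Finally, for the $\Sigma_k$-equivariant statement, I would observe that $\widetilde{H}_t$ commutes with the permutation action: since $H_t$ is applied coordinatewise, $\widetilde{H}_t(\sigma \cdot (x_1,\ldots,x_k)) = \sigma \cdot \widetilde{H}_t(x_1,\ldots,x_k)$ for every $\sigma \in \Sigma_k$. Thus $G$ and all the homotopies are $\Sigma_k$-equivariant, which upgrades the equivalence to a $\Sigma_k$-equivariant homotopy equivalence. The main obstacle I anticipate is the technical construction of the collar-pushing family $H_t$ as a genuinely \emph{injective} continuous family: one must ensure the cutoff-and-reparametrization is done so that $H_t$ remains a homeomorphism onto its image for every $t$ (injectivity can fail at the seam where the collar meets the rest of $M$ if the reparametrization is chosen carelessly), and that the assignment $(t,x) \mapsto H_t(x)$ is jointly continuous. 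Everything downstream—the descent to $F(M,k)$ and the equivariance—is then formal.
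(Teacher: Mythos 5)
Your proposal is correct and takes essentially the same approach as the paper: a collar-pushing isotopy of injective self-maps $H_t$ of $M$, applied coordinatewise to descend to $F(M,k)$, with the $\Sigma_k$-equivariance being automatic from the coordinatewise definition. The seam-injectivity issue you flag is handled in the paper by the explicit formula $h(x,t)\mapsto h\bigl(x,(\tfrac{1}{2}+\tfrac{s}{2})t\bigr)$ on the collar and the identity off it, which agree at the inner edge of the collar and are injective at every time $s$.
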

\begin{proof}
Let $V$ be the open collar of $\partial M$ in $M$, where $h:\partial M\times (-1,1]\cong V$ and $\partial M$ is identified with $\partial M\times \{1\}$.

We consider the closed set in $M$,\begin{equation}\label{F}
F=M-h(\partial M\times (0,1])\subset Int(M)
\end{equation} and define the map $p:M\longrightarrow Int(M)$ given by: \[ \forall
y\in M,~~
 p(y):=\left\{
              \begin{array}{ll}
                h(x,\frac{1}{2}t), & \hbox{if $y=h(x,t),~~0\leq t\leq 1$.} \\
                y, & \hbox{if $y\in F$.}
              \end{array}
            \right.
  \] Note that the map $p$ is continuous and injective.
  
  To continue, define the isotopy\footnote{that is, $H_t$ is injective, $\forall t\in [0,1]$} $H:M\times [0,1]\longrightarrow M$ given by: \[ \forall
y\in M, \;\; \forall \;\; s\in [0,1],~~
 H(y,s):=\left\{
              \begin{array}{ll}
                h(x,(\frac{1}{2}+\frac{s}{2})t), & \hbox{if $y=h(x,t),~~0\leq t\leq 1$,} \\
                y, & \hbox{if $y\in F$;}
              \end{array}
            \right.
  \] so that $H_0=i\circ p$ and $H_1=id_{M}$. Furthermore, note that by restricting $H$ to $Int(M)\times [0,1]$, we obtain an isotopy $G$ between $p\circ i$ and $id_{Int(M)}$.
  
  Moreover the isotopies $H$ and $G$ induce homotopy equivalences in the configuration spaces,
  \[ \begin{array}{rccl}
 F(H,k):&F(M,k)\times [0,1]&\longrightarrow & F(M,k)\\&(x_1,\ldots,x_k,t)&\mapsto
 &(H(x_1,t),\ldots,H(x_k,t))
                           \end{array} \]
 and \[ \begin{array}{rccl}
 F(G,k):&F(Int(M),k)\times [0,1]&\longrightarrow & F(Int(M),k)\\&(x_1,\ldots,x_k,t)&\mapsto
 &(G(x_1,t),\ldots,G(x_k,t))
                           \end{array} \] so that $F(H,k):F(i,k)\circ F(p,k)\simeq id$ and $F(G,k):F(p,k)\circ F(i,k)\simeq id$, where  \[ \begin{array}{rccl}
 F(p,k):&F(M,k)&\longrightarrow & F(Int(M),k)\\&(x_1,\ldots,x_k)&\mapsto
 &(p(x_1),\ldots,p(x_k))
                           \end{array} \]
 and \[ \begin{array}{rccl}
 F(i,k):&F(Int(M),k)&\longrightarrow & F(M,k)\\&(x_1,\ldots,x_k)&\mapsto
 &(x_1,\ldots,x_k)
                           \end{array} \] This completes the proof of the theorem.
  
\end{proof}

\begin{rem}\label{rem-puctured}
The assertion of Theorem \ref{theor} can be strengthened for punctured manifolds, that is,  for each $k\geq 1$, the inclusion map $i:Int(M)-Q_m\hookrightarrow M-Q_m$ induces homotopy equivalences between the configuration spaces $F(Int(M)-Q_m,k)$ and $F(M-Q_m,k)$, where $Q_m\subset F\subset Int(M)$ is a fixed sequence of $m$ distinct points in $F\subset Int(M)$ (see (\ref{F})).
\end{rem}
 
\begin{cor}\label{coro}
For each positive integer $k$ the space $F(\mathbb{D}^n,k)$ is a deformation retract of the space $F(\mathbb{R}^n,k)$. Moreover, there is a $\Sigma_k-$equivariant deformation retraction of $F(\mathbb{R}^n,k)$ onto $F(\mathbb{D}^n,k)$.
\end{cor}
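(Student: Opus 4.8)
The plan is not to rely on the abstract homotopy equivalence of Theorem \ref{theor}. Applied to $M=\mathbb{D}^n$, together with the homeomorphism $Int(\mathbb{D}^n)\cong\mathbb{R}^n$, that theorem yields only a $\Sigma_k$-equivariant homotopy equivalence $F(\mathbb{R}^n,k)\simeq F(\mathbb{D}^n,k)$, whereas the corollary asserts the strictly stronger statement that $F(\mathbb{D}^n,k)\subset F(\mathbb{R}^n,k)$ is a deformation retract. So I would first record the inclusion $F(\mathbb{D}^n,k)\subset F(\mathbb{R}^n,k)$ arising from $\mathbb{D}^n\subset\mathbb{R}^n$, and then produce the retraction explicitly.

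The key idea is to contract each configuration by a single \emph{global} scalar determined by its outermost point, rather than retracting the points one at a time. Concretely, for a configuration $\mathbf{x}=(x_1,\dots,x_k)$ set $\rho(\mathbf{x})=\max_{1\le i\le k}\|x_i\|$ and $\lambda(\mathbf{x})=1/\max\{1,\rho(\mathbf{x})\}$, and define
\[
R\colon F(\mathbb{R}^n,k)\times[0,1]\longrightarrow F(\mathbb{R}^n,k),\qquad R(\mathbf{x},t)=\bigl((1-t)+t\,\lambda(\mathbf{x})\bigr)\cdot\mathbf{x},
\]
where $s\cdot\mathbf{x}=(sx_1,\dots,sx_k)$.

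Next I would check the four defining properties in turn. (i) The scalar $s(\mathbf{x},t)=(1-t)+t\lambda(\mathbf{x})$ lies in $[\lambda(\mathbf{x}),1]\subset(0,1]$, and multiplication by a strictly positive scalar preserves distinctness of the $x_i$, so $R$ indeed lands in $F(\mathbb{R}^n,k)$ at every time, i.e.\ no collisions are created. (ii) $R(\mathbf{x},0)=\mathbf{x}$. (iii) At $t=1$ one has $\|\lambda(\mathbf{x})x_i\|\le\lambda(\mathbf{x})\rho(\mathbf{x})\le 1$, so $R(\mathbf{x},1)\in F(\mathbb{D}^n,k)$. (iv) If $\mathbf{x}\in F(\mathbb{D}^n,k)$ then $\rho(\mathbf{x})\le 1$, whence $\lambda(\mathbf{x})=1$ and $R(\mathbf{x},t)=\mathbf{x}$ for all $t$; thus $R$ is a strong deformation retraction onto $F(\mathbb{D}^n,k)$.

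Finally I would verify $\Sigma_k$-equivariance: both $\rho$ and $\lambda$ are invariant under permuting the coordinates, and scaling commutes with permutation, so $R(\sigma\cdot\mathbf{x},t)=\sigma\cdot R(\mathbf{x},t)$ for every $\sigma\in\Sigma_k$. The only point requiring genuine care — and exactly the place where the naive radial retraction $x\mapsto x/\max\{1,\|x\|\}$ fails — is continuity together with injectivity. I must confirm that $\lambda$ is continuous everywhere, in particular where $\rho\le 1$ (including the degenerate case $k=1$, $\mathbf{x}=0$, where $\max\{1,\rho\}=1$ keeps $\lambda\equiv 1$ near the origin), and that using one common factor $\lambda(\mathbf{x})$ for all the points is precisely what prevents two distant points lying on a common ray from being identified. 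This is the main obstacle, and it is resolved exactly by the global rather than pointwise choice of scaling.
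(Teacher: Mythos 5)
Your construction is correct, and it is a genuinely different route from the paper's. The paper offers no separate argument for Corollary \ref{coro}: it is presented as an instance of Theorem \ref{theor}, whose collar--isotopy argument (applied to $M=\mathbb{D}^n$, together with $Int(\mathbb{D}^n)\cong\mathbb{R}^n$) yields a $\Sigma_k$-equivariant homotopy equivalence $F(\mathbb{R}^n,k)\simeq F(\mathbb{D}^n,k)$, but not, on the nose, a deformation retraction of $F(\mathbb{R}^n,k)$ onto the subspace $F(\mathbb{D}^n,k)$ --- the isotopy $H$ there compresses a collar and is not the identity on the target subspace, so extracting the literal statement of the corollary from it would take additional work. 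Your global rescaling $R(\mathbf{x},t)=\bigl((1-t)+t\lambda(\mathbf{x})\bigr)\cdot\mathbf{x}$ with $\lambda(\mathbf{x})=1/\max\{1,\max_i\|x_i\|\}$ delivers exactly the stated (indeed strong) $\Sigma_k$-equivariant deformation retraction, and all four verifications you list go through: the scalar is positive so no collisions are created, $\lambda$ is continuous because the denominator is bounded below by $1$, configurations already in $F(\mathbb{D}^n,k)$ are fixed for all $t$, and symmetry of $\lambda$ gives equivariance. Your diagnosis of why the pointwise retraction $x\mapsto x/\max\{1,\|x\|\}$ fails (it identifies distinct points on a common ray, hence does not even induce a map of configuration spaces) is also the right one, and it explains why \emph{no} coordinatewise retraction of $\mathbb{R}^n$ onto $\mathbb{D}^n$ can work: a retraction fixing $\mathbb{D}^n$ cannot be injective. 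What your approach buys is an explicit, elementary, self-contained proof of the literal claim; what the paper's approach buys is generality, since the collar argument works for an arbitrary compact manifold with boundary, where no global linear structure is available for a configuration-dependent rescaling.
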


Theorem \ref{theor-F-G} we state in this section is known, It can be found in the paper by Michael Farber and Mark Grant \cite{farber2009topological}. 

\begin{thm}(\cite{farber2009topological}, Theorem 1.1)\label{theor-F-G}
For $n,k\geq 2$,
\begin{equation}
  TC(F(\mathbb{R}^n,k))=\left\{ \begin{array}{ll}
                                                           2k-1, & \hbox{ for $n$ odd;} \\
                                                           2k-2, & \hbox{ for $n$ even.}
                                                         \end{array}
                                                       \right.
  \end{equation}
\end{thm}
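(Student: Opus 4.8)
The plan is to reconstruct the Farber--Grant computation by combining a cohomological lower bound for $TC$ with a dimension--connectivity upper bound, carrying out both inside the well-understood cohomology ring of $F(\mathbb{R}^n,k)$; by homotopy invariance of $TC$ together with Corollary \ref{coro}, the same values then hold for $F(\mathbb{D}^n,k)$. First I would recall the Arnold--Cohen presentation: with coefficients in a field $\mathbb{F}$ of characteristic $0$ (or $\neq 2$), $H^*(F(\mathbb{R}^n,k);\mathbb{F})$ is generated by classes $e_{ij}$ ($1\le i\neq j\le k$) in degree $n-1$, where $e_{ij}$ is the pullback of the fundamental class of $S^{n-1}$ under the Gauss map $(x_1,\dots,x_k)\mapsto (x_i-x_j)/\lVert x_i-x_j\rVert$; these satisfy $e_{ij}=(-1)^n e_{ji}$, $e_{ij}^2=0$, and the Arnold relation $e_{ij}e_{jk}+e_{jk}e_{ki}+e_{ki}e_{ij}=0$. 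A monomial basis is given by the products $e_{i_1 j_1}\cdots e_{i_r j_r}$ with $i_s<j_s$ and $j_1<\cdots<j_r$; in particular the length-$(k-1)$ monomial $e_{12}e_{13}\cdots e_{1k}$ is a nonzero basis element, and the rational cohomology has Poincar\'e polynomial $\prod_{j=1}^{k-1}(1+jt^{n-1})$, so $F(\mathbb{R}^n,k)$ is $(n-2)$-connected with homotopy dimension $(n-1)(k-1)$.

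For the lower bound I would use the zero-divisor cup-length $\mathrm{zcl}$, which gives $TC\ge \mathrm{zcl}+1$ in the normalization $TC(\mathrm{pt})=1$ of this paper. Working in $H^*(F\times F)\cong H^*(F)\otimes H^*(F)$, set $\bar e_{ij}=1\otimes e_{ij}-e_{ij}\otimes 1$, a zero-divisor. A Koszul-sign computation gives $\bar e_{ij}^2=-2\,e_{ij}\otimes e_{ij}$ when $n$ is odd (so $|e_{ij}|=n-1$ is even) and $\bar e_{ij}^2=0$ when $n$ is even. Hence for odd $n$ the length-$(2k-2)$ product $\prod_{j=2}^{k}\bar e_{1j}^2=(-2)^{k-1}(e_{12}\cdots e_{1k})\otimes(e_{12}\cdots e_{1k})$ is nonzero, so $\mathrm{zcl}\ge 2k-2$ and $TC\ge 2k-1$. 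For even $n$ the squares vanish, so I would instead exhibit a nonzero product of $2k-3$ distinct zero-divisors, the natural candidate being $\bigl(\prod_{j=2}^{k}\bar e_{1j}\bigr)\bigl(\prod_{j=3}^{k}\bar e_{2j}\bigr)$ (of length $(k-1)+(k-2)=2k-3$); expanding this and reducing via the Arnold relations should leave a nonzero multiple of a single (basis)$\otimes$(basis) term, yielding $\mathrm{zcl}\ge 2k-3$ and $TC\ge 2k-2$.

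For the upper bound I would first apply Farber's connectivity--dimension estimate: an $r$-connected CW complex $X$ satisfies $TC(X)<\frac{2\dim X+1}{r+1}+1$. With $r=n-2$ and $\dim X=(n-1)(k-1)$ this yields $TC(F(\mathbb{R}^n,k))\le 2k-1$ for all $n\ge 2$, which already closes the odd case against the lower bound above. The main obstacle is the even case, where the formula demands the sharper bound $TC\le 2k-2$, one below what the generic estimate provides. Here I would exploit that for even $n$ the sphere $S^{n-1}$ is odd-dimensional, hence admits a nonvanishing vector field and satisfies $TC(S^{n-1})=2$, and feed this into the iterated Fadell--Neuwirth tower $F(\mathbb{R}^n,k)\to F(\mathbb{R}^n,k-1)\to\cdots$ (whose fibers are wedges $\bigvee S^{n-1}$) through a fibered motion-planning argument, saving exactly one local domain from the vanishing of the relevant obstruction. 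Checking that this saving is legitimate---that is, assembling the fiberwise sections into a single global cover of size $2k-2$---is the delicate point and the crux of the whole computation; everything else is bookkeeping in the ring described above.
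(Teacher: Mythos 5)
First, note that the paper does not prove this statement at all: Theorem \ref{theor-F-G} is quoted verbatim from Farber and Grant \cite{farber2009topological} (their Theorem 1.1) and is used purely as an external input, so there is no internal proof to compare your reconstruction against; what follows judges your outline on its own terms. Three of the four inequalities you sketch are correct and standard: the Arnold--Cohen presentation of $H^*(F(\mathbb{R}^n,k))$, the computation $\bar e_{1j}^2=-2\,e_{1j}\otimes e_{1j}$ for odd $n$ giving $\mathrm{zcl}\ge 2k-2$ and hence $TC\ge 2k-1$, and the connectivity--dimension estimate giving $TC\le 2k-1$ for every $n\ge 2$ are exactly the Farber--Yuzvinsky arguments. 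Your candidate product of $2k-3$ zero-divisors for even $n$ is also essentially the right one, although ``expanding and reducing via the Arnold relations should leave a nonzero term'' is an assertion rather than a proof; that reduction against the monomial basis is a genuine (if routine) computation that must actually be carried out.

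The real gap is the upper bound $TC(F(\mathbb{R}^n,k))\le 2k-2$ for even $n$, which you correctly identify as the crux and then do not prove. The mechanism you propose --- feeding $TC(S^{n-1})=2$ into the Fadell--Neuwirth tower via a ``fibered motion-planning argument'' --- is not how this bound is obtained and is unlikely to yield it: the available fibration inequalities (for instance $TC(E)\le TC(F)\cdot \mathrm{cat}(B\times B)$ for a fibration $F\to E\to B$) degrade multiplicatively when iterated over the $(k-1)$-stage tower and land far above $2k-2$, and there is no general principle that lets you ``save exactly one domain'' at each stage. What Farber and Grant actually do is construct an explicit motion planner directly on $F(\mathbb{R}^n,k)\times F(\mathbb{R}^n,k)$: they cover it by $2k-2$ pairwise disjoint ENR subsets, indexed by coincidence patterns of the two configurations after projection, each admitting a continuous section, and invoke a gluing statement valid for such non-open covers; the parity of $n$ enters through a continuous choice of rotation of $\mathbb{R}^n$ (morally the nonvanishing vector field on $S^{n-1}$ that you mention) which merges two of the domains. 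So you have located the correct geometric input but not the argument that uses it, and as written the decisive inequality of the theorem remains unestablished.
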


As an immediately consequence we have: 
 
\begin{thm}\label{tc-dn}
  For each positive integer $n\geq 2$ and $k\geq 2$,
  \begin{equation}
  TC(F(\mathbb{D}^n,k))=\left\{ \begin{array}{ll}
                                                           2k-1, & \hbox{ for $n$ odd;} \\
                                                           2k-2, & \hbox{ for $n$ even.}
                                                         \end{array}
                                                       \right.
  \end{equation}
\end{thm}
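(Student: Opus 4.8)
The plan is to deduce the formula directly from the Euclidean computation by means of the homotopy invariance of topological complexity. The key observation is that Theorem~\ref{theor-F-G} already records the exact value of $TC$ for the configuration spaces of $\mathbb{R}^n$, so all that remains is to transport this value across the homotopy equivalence supplied by Corollary~\ref{coro}. In other words, the substantive work has already been carried out, and the present statement is obtained by assembling those pieces.

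First I would invoke Corollary~\ref{coro}, which asserts that $F(\mathbb{D}^n,k)$ is a deformation retract of $F(\mathbb{R}^n,k)$; in particular the two spaces are homotopy equivalent. (Equivalently, one may observe that $Int(\mathbb{D}^n)$ is homeomorphic to $\mathbb{R}^n$ and apply Theorem~\ref{theor} to the manifold $M=\mathbb{D}^n$, whose interior is the open unit ball, so that $F(\mathbb{D}^n,k)\simeq F(Int(\mathbb{D}^n),k)\cong F(\mathbb{R}^n,k)$.) Next I would appeal to the homotopy invariance of topological complexity recorded in the introduction (Farber, \cite{farber2003topological}, Theorem~3), which gives
\[
TC(F(\mathbb{D}^n,k))=TC(F(\mathbb{R}^n,k)).
\]
Finally, substituting the values from Theorem~\ref{theor-F-G} yields $2k-1$ for $n$ odd and $2k-2$ for $n$ even, which is exactly the claimed formula.

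There is no genuine obstacle here: the entire content of the theorem has been pushed into the two results being cited. The only points meriting a moment's care are verifying that the hypotheses $n,k\geq 2$ coincide with those of Theorem~\ref{theor-F-G}, and confirming that Corollary~\ref{coro} indeed produces a bona fide homotopy equivalence (rather than merely a continuous map) so that the homotopy invariance of $TC$ genuinely applies. Both are immediate, so the argument reduces to a short chain of equalities.
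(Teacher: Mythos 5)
Your argument is correct and is exactly the paper's proof: the author likewise combines Corollary \ref{coro} (the deformation retraction of $F(\mathbb{R}^n,k)$ onto $F(\mathbb{D}^n,k)$) with the homotopy invariance of $TC$ and reads off the values from Theorem \ref{theor-F-G}. Your write-up simply spells out the same short chain of reductions in more detail.
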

\begin{proof}
We apply Corollary \ref{coro} which together with homotopy invariance of TC gives the proof.
\end{proof}

\begin{rem}
For $k>r\geq 1$ there is a natural map
\begin{equation}
\pi_{k,r}:F(M,k)\longrightarrow F(M,r)
\end{equation} obtained by projecting to the first $r$ factors. 

It is well known that the restriction map \begin{equation}
\pi_{k,r}:F(Int(M),k)\longrightarrow F(Int(M),r)
\end{equation} is a fibration with fiber $F(Int(M)-Q_r,k-r)$, when $dim(M)\geq 2$. In contrast, the map $\pi_{k,r}:F(M,k)\longrightarrow F(M,r)$ is not a fibration. The fact that the map $\pi_{k,r}:F(M,k)\longrightarrow F(M,r)$ is not a fibration may be seen by considering, for example, the manifold $\mathbb{D}^2$ that is with boundary but the fiber $\mathbb{D}^2-\{(0,0)\}$ is not homotopy equivalent to the fiber $\mathbb{D}^2-\{(1,0)\}$.

It is easy to check that there is a commutative diagram 
\begin{equation}
\xymatrix{ F(M-Q_r,k-r) \ar[d]_{F(p,k-r)} \ar[r]^{\quad\quad j} \ar@{}[dr]|{\circlearrowleft} & 
F(M,k) \ar[d]_{F(p,k)} \ar[r]^{\pi_{k,r}} \ar@{}[dr]|{\circlearrowleft} & F(M,r) \ar[d]^{F(p,r)} \\ F(Int(M)-Q_r,k-r) \ar[r]_{\quad\quad j} & F(Int(M),k) \ar[r]_{\pi_{k,r}} & F(Int(M),r)
}
\end{equation}
where $j$ is the inclusion map, $F(p,k)$ was defined by Theorem \ref{theor} and $Q_m\subset F\subset Int(M)$ is a fixed sequence of $m$ distinct points in $Int(M)$ (see Remark \ref{rem-puctured}). 

In this case the long exact sequence of homotopy groups of the fibration  \[\pi_{k,r}:F(Int(M),k)\longrightarrow F(Int(M),r)\] can induce a long exact sequence of homotopy groups for the map  $\pi_{k,r}:F(M,k)\longrightarrow F(M,r)$ (see Diagram (\ref{induz-seq})). Where \begin{equation} \partial:=F(i,k-r)_\ast \circ \delta\circ F(p,k)_\ast, \end{equation} we recall $(F(p,k-r)_\ast)^{-1}=F(i,k-r)_\ast$.
\end{rem}

\newpage
\rotatebox{90}{
\begin{minipage}{\textheight}
\vbox{
\begin{equation}\label{induz-seq}
 \xymatrix{  \ar[r] & \pi_{q+1}(F(M,r)) \ar[r]^{\partial\quad\quad} \ar[d]_{F(p,r)_\ast} \ar@{}[dr]|{\circlearrowleft}& \pi_q(F(M-Q_r,k-r)) \ar[r]^{\quad\quad j_\ast} \ar[d]_{F(p,k-r)_\ast}  \ar@{}[dr]|{\circlearrowleft} & \pi_q(F(M,k)) \ar[r]^{(\pi_{k,r})_\ast} \ar[d]_{F(p,k)_\ast} \ar@{}[dr]|{\circlearrowleft} & \pi_q(F(M,r))  \ar[d]_{F(p,r)_\ast}  \\ \ar[r] & \pi_{q+1}(F(Int(M),r)) \ar[r]_{\delta\quad\quad} & \pi_q(F(Int(M)-Q_r,k-r)) \ar[r]_{\quad\quad j_\ast} & \pi_q(F(Int(M),k)) \ar[r]_{(\pi_{k,r})_\ast} & \pi_q(F(Int(M),r))    
 }
\end{equation}
}
\end{minipage}

}
\newpage

\subsection{On the homotopy fiber of the inclusion map $F(M,k)\hookrightarrow \prod_{1}^{k}M$}

Recently, the homotopy fiber of the inclusion map $i_k(X):F(X,k)\hookrightarrow \prod_{1}^{k}$ was considered by Golasi{\'n}ski,  Gon{\c{c}}alves and Guaschi in \cite{golasinski2017homotopy} when $X$ is a topological manifold without boundary.

Let $X$ and $Y$ topological spaces. Given a map $f:X\longrightarrow Y$ and $y_0\in Y$, we will consider
\begin{equation}
E_f=\{(x,\gamma)\in X\times PY\mid~\gamma(0)=f(x)\} \text{ and } I_f=\{(x,\gamma)\in E_f\mid~\gamma(1)=y_0\}
\end{equation} the \textit{mapping path} and \textit{homotopy fibre} of $f$, respectively.

It is well known that \begin{equation}
p:E_f\longrightarrow Y,~(x,\gamma)\mapsto p(x,\gamma)=\gamma(1),
\end{equation} is a fibration with fiber $I_f$ and that $E_f$ and $X$ have the same homtopy type (\cite{hatcher2002algebraic}, Proposition 4.64).

The following theorem, which is the main result of this section, describes the homotopy type of the homotopy fiber of the inclusion  map $i_k(M):F(M,k)\hookrightarrow \prod_{1}^{k}M$.

\begin{thm}\label{homotopy-fiber}
\begin{equation}
I_{i_k(M)}\simeq I_{i_k(Int(M))}.
\end{equation} Where we write $X\simeq Y$ if $X$ and $Y$ have the same homotopy type.
\end{thm}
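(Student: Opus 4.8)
The plan is to exploit naturality of the homotopy-fibre construction together with the equivalences already in hand. First I would recall from the proof of Theorem~\ref{theor} that the inclusion $i:Int(M)\hookrightarrow M$ is itself a homotopy equivalence: the isotopy $H$ shows $i\circ p\simeq id_M$ and the isotopy $G$ shows $p\circ i\simeq id_{Int(M)}$, so $p$ is a two-sided homotopy inverse of $i$. Consequently the $k$-fold product $\prod_{1}^{k}i:\prod_{1}^{k}Int(M)\longrightarrow\prod_{1}^{k}M$ is a homotopy equivalence as well, with inverse $\prod_{1}^{k}p$.

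Next I would record the strictly commutative square
\[
\xymatrix{
F(Int(M),k) \ar[r]^{i_k(Int(M))} \ar[d]_{F(i,k)} & \prod_{1}^{k}Int(M) \ar[d]^{\prod_{1}^{k}i} \\
F(M,k) \ar[r]_{i_k(M)} & \prod_{1}^{k}M
}
\]
in which both composites send $(x_1,\ldots,x_k)$ to $(x_1,\ldots,x_k)$, the left vertical map is a homotopy equivalence by Theorem~\ref{theor}, and the right vertical map is a homotopy equivalence by the previous paragraph. Choosing the basepoint of $\prod_{1}^{k}M$ to be the image under $\prod_{1}^{k}i$ of the chosen basepoint of $\prod_{1}^{k}Int(M)$ (legitimate since $M^k$ is path-connected, so the homotopy type of the fibre is independent of the basepoint), this square induces a map of homotopy fibres $I_{i_k(Int(M))}\longrightarrow I_{i_k(M)}$ sending $(x,\gamma)$ to $\left(F(i,k)(x),\,(\prod_{1}^{k}i)\circ\gamma\right)$.

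To finish I would show this induced map is a homotopy equivalence. Each inclusion fits into its mapping-path fibration $E_{i_k(M)}\to\prod_{1}^{k}M$ with fibre $I_{i_k(M)}$ and $E_{i_k(M)}\simeq F(M,k)$, and likewise for the interior; the square induces a map between these two fibration sequences, hence a ladder between their long exact sequences of homotopy groups. Since $F(i,k)$ and $\prod_{1}^{k}i$ induce isomorphisms on all homotopy groups, the five lemma forces $I_{i_k(Int(M))}\to I_{i_k(M)}$ to be a weak homotopy equivalence. Because $F(M,k)$, $F(Int(M),k)$, $M^k$ and $Int(M)^k$ all have the homotopy type of CW complexes (they are manifolds), a theorem of Milnor gives the homotopy fibres CW homotopy type as well, whence Whitehead's theorem upgrades the weak equivalence to a genuine homotopy equivalence $I_{i_k(M)}\simeq I_{i_k(Int(M))}$.

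The only delicate point, and the main obstacle, is this last passage from a weak equivalence to an honest homotopy equivalence, that is, verifying that the homotopy fibres have CW homotopy type so that Whitehead's theorem applies. This can be circumvented entirely by instead invoking the general, CW-free fact that a commutative square whose two vertical maps are homotopy equivalences induces a homotopy equivalence on the horizontal homotopy fibres (equivalently, that homotopy pullbacks are invariant under objectwise homotopy equivalence); proved directly from the mapping-path model, this delivers the conclusion without any appeal to Whitehead.
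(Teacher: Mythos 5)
Your proposal is correct and, in its final form, is essentially the paper's argument: the ``CW-free fact'' you invoke at the end --- that a commutative square with vertical homotopy equivalences induces a homotopy equivalence on horizontal homotopy fibres --- is exactly what the paper cites as Lemma 45 of Mather's \emph{Pull-backs in homotopy theory}, applied to the same square (with the vertical maps taken in the direction of $p$ rather than $i$, which is immaterial). Your intermediate five-lemma-plus-Whitehead detour is a valid alternative but, as you yourself note, unnecessary once the general lemma is in hand.
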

\begin{proof}
Since $F(p,k):F(M,k)\longrightarrow F(Int(M),k)$ and $\prod_{1}^{k}p:\prod_{1}^{k}M\longrightarrow \prod_{1}^{k}Int(M)$ are homotopy equivalences (see Theorem \ref{theor}), then by applying (\cite{mather1976pull}, Lemma 45) to the following homotopy commutative diagram:
\begin{equation}
\xymatrix{ F(M,k) \ar[d]_{F(p,k)} \ar[r]^{\quad i_k(M)} \ar@{}[dr]|{\circlearrowleft} & 
\prod_{1}^{k}M \ar[d]^{\prod_{1}^{k}p} \\ F(Int(M),k)\quad \ar[r]_{\quad i_k(Int(M))\quad} &\quad \prod_{1}^{k}Int(M)
}
\end{equation}
We see that $I_{i_k(M)}$ has the same homotopy type of  $I_{i_k(Int(M))}$.
\end{proof}

We recall that a map $f:X\longrightarrow Y$ of pointed spaces is said to be \textit{$n$-connected} if the induced homomorphism $f_\ast:=\pi_q(f):\pi_q(X)\longrightarrow \pi_q(Y)$ is surjective if $q=n$ and is an isomorphism for all $0\leq q\leq n-1$. It is well known that the map $f$ is $n-$connected if and only if its homotopy fiber $I_f$ is $(n-1)-$connected, that is, $\pi_q(I_f)=0$ for all $0\leq q\leq n-1$.

Theorem \ref{marek} we state in this section is known, It can be found in the paper by Golasi{\'n}ski,  Gon{\c{c}}alves and Guaschi \cite{golasinski2017homotopy}. 

\begin{thm}\label{marek}(\cite{golasinski2017homotopy}, Theorem 3.2)
Let $X$ be a connected, topological manifold without boundary. Then the inclusion map $i_k(X):F(X,k)\hookrightarrow \prod_{1}^{k} X$ is $(dim(X)-1)-$connected.
\end{thm}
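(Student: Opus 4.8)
The plan is to prove the statement by induction on $k$, using the Fadell--Neuwirth fibration together with a comparison of the associated long exact sequences of homotopy groups. Write $n=\dim(X)$ and assume $n\ge 2$ (the case $n=1$ is disposed of below). For $k=1$ the map $i_1(X)$ is the identity of $X$, which is trivially $(n-1)$-connected, so this is the base case. For the inductive step I would exploit the forgetful map $\pi_{k,k-1}:F(X,k)\to F(X,k-1)$ deleting the last point: since $X$ is a manifold without boundary, this is a fibration with fibre $X-Q_{k-1}$, while the projection $X^{k}\to X^{k-1}$ is a (trivial) fibration with fibre $X$. The inclusions $i_k(X)$, $i_{k-1}(X)$ and the fibre inclusion $X-Q_{k-1}\hookrightarrow X$ then assemble into a map of fibrations, and $X-Q_{k-1}$, $F(X,k-1)$ are connected, so all homotopy exact sequences are available.

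The decisive local input is the following claim: for a connected $n$-manifold without boundary $X$ and any finite set $Q_m$ of $m$ points, the inclusion $X-Q_m\hookrightarrow X$ is $(n-1)$-connected. I would establish this by induction on $m$, reducing to the single-point case (a composite of $(n-1)$-connected maps is again $(n-1)$-connected), and I would prove the single-point case by a general-position argument carried out inside a Euclidean chart. Given $f:(D^q,\partial D^q)\to(X,X-p)$ with $q\le n-1$, I would approximate $f$ near $p$ by a map that is smooth in a chart $U\cong\mathbb{R}^n$ around $p$ and transverse to $\{p\}$; since $q<n$ this forces the corrected map to miss $p$, and the correction can be taken rel $\partial D^q$ because $\partial D^q$ already lands in $X-p$. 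Hence $\pi_q(X,X-p)=0$ for $q\le n-1$, which is exactly $(n-1)$-connectivity. The point is that, although $X$ is merely a topological manifold, transversality is only ever invoked inside a chart $\cong\mathbb{R}^n$, where the smooth (or PL) theory applies.

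With the fibre inclusion shown to be $(n-1)$-connected and the base inclusion $i_{k-1}(X)$ assumed $(n-1)$-connected by induction, I would finish by a diagram chase on the map of long exact homotopy sequences of the two fibrations. In the range $q\le n-2$ the five lemma, applied to the fibre and base maps (which are isomorphisms there) flanked by a surjection on $\pi_{q+1}$ of the base and an injection on $\pi_{q-1}$ of the fibre, gives that $(i_k(X))_\ast$ is an isomorphism on $\pi_q$; in degree $q=n-1$ the epimorphism form of the four lemma (using that the fibre and base maps are surjective on $\pi_{n-1}$ and the fibre map is injective on $\pi_{n-2}$) gives that $(i_k(X))_\ast$ is surjective on $\pi_{n-1}$. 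Together these say precisely that $i_k(X)$ is $(n-1)$-connected, completing the induction.

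I expect the main obstacle to be the fibre-connectivity claim in the topological category: making the general-position argument rigorous for a merely topological manifold requires localizing all transversality to Euclidean charts and verifying that the resulting homotopies glue rel boundary. A secondary technical point is the behaviour of the five/four-lemma argument in low degrees ($\pi_0$ and the nonabelian $\pi_1$), where one must invoke the pointed-set and group versions of the exact-sequence comparison. Finally, the case $n=1$, where the Fadell--Neuwirth fibration is unavailable, I would treat directly, since $(n-1)$-connectivity then asserts only surjectivity on $\pi_0$, which is immediate as $X^k$ is connected.
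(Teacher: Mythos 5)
The paper offers no proof of this statement: it is quoted verbatim from \cite{golasinski2017homotopy} (their Theorem 3.2), so there is no internal argument to compare yours against. Judged on its own merits, your proposal is correct and is essentially the standard proof of this fact. The two inputs you isolate are exactly the right ones: (i) the connectivity of the puncture inclusion $X-Q_m\hookrightarrow X$, reduced to $\pi_q(X,X-p)=0$ for $q\le n-1$ by a general-position argument localized to a Euclidean chart (this is where all the real content lives, and your remark that transversality is only ever invoked inside a chart is the correct way to handle the purely topological category); and (ii) the comparison of the Fadell--Neuwirth fibration $F(X,k)\to F(X,k-1)$ with the trivial fibration $X^k\to X^{k-1}$ via the five/four lemma. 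Your bookkeeping of which maps must be surjective on $\pi_{q+1}$ of the base and injective on $\pi_{q-1}$ of the fibre is accurate, and you correctly flag the low-degree (pointed-set, nonabelian $\pi_1$) versions and the degenerate case $n=1$. One remark worth making: the induction on $k$ can be avoided altogether by running the same general-position argument directly in $X^k$, pushing a map of $(D^q,\partial D^q)$ off the fat diagonal $X^k\setminus F(X,k)$, which is locally a finite union of codimension-$n$ affine subspaces in a chart of $X^k$; this gives $\pi_q(X^k,F(X,k))=0$ for $q\le n-1$ in one step and sidesteps both the Fadell--Neuwirth fibration and the basepoint issues in the long exact sequences. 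Either route is fine; yours is complete in outline and I see no gap beyond the technical care you have already acknowledged.
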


As a consequence of Theorem \ref{homotopy-fiber} and Theorem \ref{marek}, we have the following result.

\begin{prop}\label{dim-con} Let $M$ be a connected, compact topological manifold with boundary. Then the inclusion map $i_k(M):F(M,k)\hookrightarrow \prod_{1}^{k} M$ is $(dim(M)-1)-$connected.
\end{prop}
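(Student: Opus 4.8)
The plan is to reduce the assertion about $M$ to the already-established assertion about its interior, exploiting the equivalence of homotopy fibers furnished by Theorem \ref{homotopy-fiber}. The engine of the argument is the characterization recalled just before Theorem \ref{marek}: a pointed map $f$ is $n$-connected if and only if its homotopy fiber $I_f$ is $(n-1)$-connected. Thus connectivity of $i_k(M)$ will be read off from the vanishing of the low-dimensional homotopy groups of $I_{i_k(M)}$, and those groups will be controlled by passing to $I_{i_k(Int(M))}$.

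First I would record that $Int(M)$ is itself a connected topological manifold \emph{without} boundary of the same dimension as $M$, since $M$ is connected and the boundary of a manifold of dimension $\geq 1$ is nowhere dense, so its removal does not disconnect $M$; hence Theorem \ref{marek} is applicable to $X = Int(M)$. Applying it gives that $i_k(Int(M)) : F(Int(M),k) \hookrightarrow \prod_1^k Int(M)$ is $(\dim(Int(M))-1)$-connected, that is, $(\dim(M)-1)$-connected.

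Next I would translate this through the fiber characterization in one direction: the map $i_k(Int(M))$ being $(\dim(M)-1)$-connected is equivalent to the statement that its homotopy fiber $I_{i_k(Int(M))}$ is $(\dim(M)-2)$-connected, i.e. $\pi_q(I_{i_k(Int(M))}) = 0$ for all $0 \leq q \leq \dim(M)-2$. By Theorem \ref{homotopy-fiber} we have a homotopy equivalence $I_{i_k(M)} \simeq I_{i_k(Int(M))}$, so the corresponding homotopy groups are isomorphic and therefore $I_{i_k(M)}$ is likewise $(\dim(M)-2)$-connected. Running the characterization in the opposite direction then yields that $i_k(M)$ is $(\dim(M)-1)$-connected, which is exactly the claim.

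The argument is essentially formal once the inputs are in place, so there is no serious obstacle; the only point demanding care is the bookkeeping of connectivity indices (the shift by one between the connectivity of a map and that of its fiber, applied twice) together with the preliminary verification that $Int(M)$ genuinely satisfies the hypotheses of Theorem \ref{marek}, namely connectedness and absence of boundary.
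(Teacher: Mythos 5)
Your argument is correct and coincides with the paper's own (the paper gives no written-out proof, simply stating that the proposition is a consequence of Theorem \ref{homotopy-fiber} and Theorem \ref{marek}; your chain --- apply Theorem \ref{marek} to $Int(M)$, convert to $(\dim(M)-2)$-connectivity of the homotopy fiber, transfer across the equivalence $I_{i_k(M)}\simeq I_{i_k(Int(M))}$, and convert back --- is exactly the intended deduction). Your explicit checks that $Int(M)$ is connected, boundaryless, and of the same dimension are a welcome bit of added care.
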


\begin{rem}
The assertion of Theorem \ref{homotopy-fiber} and Proposition \ref{dim-con} can be strengthened for punctured manifolds, that is,  for each $k\geq 1$, the homotopy fiber $I_{i_k(M-Q_m)}$ has the same homotopy type of  $I_{i_k(Int(M)-Q_m)}$. Furthermore, the inclusion map $i_k(M-Q_m):F(M-Q_m,k)\hookrightarrow \prod_{1}^{k} (M-Q_m)$ is $(dim(M)-1)-$connected. Where $Q_m\subset F\subset Int(M)$ is a fixed sequence of $m$ distinct points in $F\subset Int(M)$ (see (\ref{F})).
\end{rem}


\section{\bf Sphere world}

A $n-$dimensional \textit{sphere world} \cite{koditschek1990robot} is a compact connected subset of  $\mathbb{R}^n$ 
whose boundary is formed from disjoint union of a finite number of $(n-1)-$spheres. 

In this paper we will consider the following classical sphere world \cite{koditschek1990robot}. A large sphere which bounds the \textit{work-space}, \begin{equation}
W:=\{x\in \mathbb{R}^n\mid~\| x\|\leq r_0\}
\end{equation} and $m$ smaller $(n-1)-$spheres which bound the \textit{obstacles},
\begin{equation}
O_i:=\{x\in\mathbb{R}^n\mid~\|x-x_i\|< r_i\},~i=1,\ldots,m
\end{equation}

The \textit{free-space} remains after removing all the obstacles from the work-space,
\begin{equation}
X_{n,m}:=W\setminus \bigcup_{i=1}^{m}O_i
\end{equation}

Here $n\geq 2$ and $m$ are integer numbers. For $X_{n,m}$ to be a sphere world we must impose that all obstacle closures are contained in the interior of the work-space (that is, $r_0>0$ and $\|x_i\|+r_i<r_0,~i=1,\ldots,m$) and that none of them intersect (that is, $\|x_1-x_j\|>r_i+r_j,~i,j=1,\ldots,m$). For ease of exposition we center the work-space at the origin of $\mathbb{R}^n$.

\begin{figure}[!h]
\caption{$n-$dimensional sphere world $X_{n,m}$.}
 \label{n-dimesional-sphere-world}
 \centering
 \includegraphics[scale=0.8]{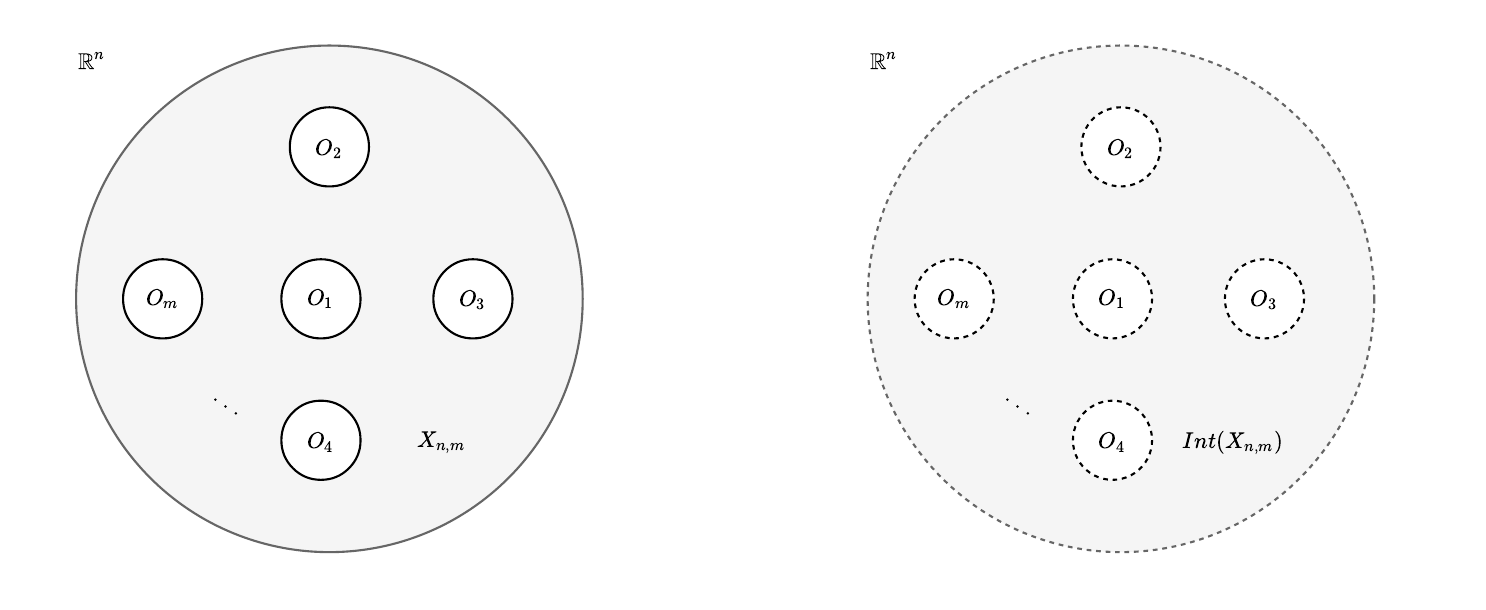}
\end{figure}

Clearly, $X_{n,m}$ is a $n-$dimensional connected and compact manifold with nonempty boundary. Let $Q_m=\{q_1,\ldots,q_m\}\subset \mathbb{R}^n$ be a fixed sequence of $m$ distinct points in the Euclidean space $\mathbb{R}^n$. 

We recall the following proposition.
\begin{prop}\label{p-1}
\begin{equation}Int(X_{n,m})\cong \mathbb{R}^n-Q_m.\end{equation}
\end{prop}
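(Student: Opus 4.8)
The plan is to realize the claimed homeomorphism as a composite of elementary moves, after first unwinding the definitions. Since $W=\{\|x\|\le r_0\}$ and each $O_i=\{\|x-x_i\|<r_i\}$ is open, the set $X_{n,m}$ retains all of the inner spheres $\{\|x-x_i\|=r_i\}$ as well as the outer sphere $\{\|x\|=r_0\}$, and these together constitute $\partial X_{n,m}$. Hence
\[
Int(X_{n,m})=\{x\in\mathbb{R}^n\mid \|x\|<r_0\}\setminus\bigcup_{i=1}^{m}\overline{O_i},
\]
the open work-space ball with the $m$ disjoint closed obstacle balls $\overline{O_i}=\{\|x-x_i\|\le r_i\}$ deleted. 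I want to transform this open manifold into $\mathbb{R}^n\setminus Q_m$.

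First I would observe that the homeomorphism type of $\mathbb{R}^n\setminus Q_m$ depends only on $m$: for $n\ge 2$ the homeomorphism group of $\mathbb{R}^n$ acts $m$-transitively on $m$-point subsets (for instance by the path-connectedness of $F(\mathbb{R}^n,m)$ together with the isotopy extension theorem), so any homeomorphism carrying one $m$-point set onto another restricts to a homeomorphism of the complements. It therefore suffices to produce a homeomorphism $Int(X_{n,m})\cong\mathbb{R}^n\setminus\{x_1,\dots,x_m\}$, that is, to delete the obstacle \emph{centers} rather than the prescribed points. The first move then expands the open work-space ball to all of $\mathbb{R}^n$ while fixing the obstacles: because $\|x_i\|+r_i<r_0$, every closure $\overline{O_i}$ lies in the compact ball $\{\|x\|\le R\}$ for some $R$ with $\max_i(\|x_i\|+r_i)<R<r_0$, and a radial homeomorphism $\phi\colon\{\|x\|<r_0\}\to\mathbb{R}^n$ that is the identity on $\{\|x\|\le R\}$ and stretches the annulus $\{R\le\|x\|<r_0\}$ onto $\{R\le\|x\|<\infty\}$ carries $Int(X_{n,m})$ onto $\mathbb{R}^n\setminus\bigcup_{i}\overline{O_i}$, leaving the obstacles untouched.

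The second move collapses each closed obstacle ball onto its center, the key local fact being that $\mathbb{R}^n\setminus\overline{B}\cong\mathbb{R}^n\setminus\{\mathrm{pt}\}$. I would realize this by a radial reparametrization of a punctured collar: choosing pairwise disjoint open balls $B(x_i,R_i)\supset\overline{O_i}$, on each annulus $\{r_i<\|x-x_i\|<R_i\}$ I reparametrize the radial coordinate by a homeomorphism $(r_i,R_i)\to(0,R_i)$ that is the identity near $R_i$ and pushes $r_i^{+}$ to $0^{+}$, and I extend by the identity outside $B(x_i,R_i)$. Performing this simultaneously on the disjoint supports $B(x_i,R_i)$ yields a homeomorphism $\mathbb{R}^n\setminus\bigcup_i\overline{O_i}\to\mathbb{R}^n\setminus\{x_1,\dots,x_m\}$, and composing the two moves finishes the proof.

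The genuinely delicate point is this second move: one must check that the local reparametrizations glue to a single global homeomorphism and, above all, that both the map and its inverse stay continuous as $\|x-x_i\|\to r_i^{+}$, where the former inner sphere is being pushed toward the deleted center $x_i$ while the preimage of a punctured neighborhood of $x_i$ must remain inside the open region $\{\|x-x_i\|>r_i\}$. The expansion of the first move and the transitivity used in the reduction are routine. I expect no trouble from the standing hypotheses $n\ge 2$ together with the containment and disjointness conditions $\|x_i\|+r_i<r_0$ and $\|x_i-x_j\|>r_i+r_j$, since these are precisely what allow the enlarged supports $B(x_i,R_i)$ to be chosen pairwise disjoint and strictly inside the work-space.
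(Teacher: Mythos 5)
Your proposal is correct. Note that the paper itself offers no proof of Proposition~\ref{p-1} at all --- it is merely ``recalled'' as known --- so there is no argument in the text to compare yours against; what you have written actually supplies the missing justification. Your three steps are all sound: the identification $Int(X_{n,m})=\{\|x\|<r_0\}\setminus\bigcup_i\overline{O_i}$ is right (the manifold boundary is the union of the $m+1$ spheres, and for a codimension-zero submanifold of $\mathbb{R}^n$ the manifold interior agrees with the topological interior); the radial expansion of the open ball onto $\mathbb{R}^n$ rel a compact set containing the obstacles is standard; and the local collapse of each $\overline{O_i}$ to its center, supported in pairwise disjoint balls $B(x_i,R_i)$ and equal to the identity near $\partial B(x_i,R_i)$, glues to a global homeomorphism onto $\mathbb{R}^n\setminus\{x_1,\dots,x_m\}$. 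The point you flag as delicate is in fact harmless: the sphere $\{\|x-x_i\|=r_i\}$ is not in the domain, so continuity only needs to be checked on the open annulus, where the map is an explicit radial reparametrization with continuous inverse. The reduction from $\{x_1,\dots,x_m\}$ to the arbitrary fixed set $Q_m$ via $m$-transitivity of $\mathrm{Homeo}(\mathbb{R}^n)$ for $n\ge 2$ is also standard (and is where the hypothesis $n\ge 2$, which the paper assumes throughout this section, is used). In short, your write-up is a complete and correct proof of a statement the paper leaves unproved.
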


Now, we recall Theorem \ref{F-G-Y},  It can be found in the paper by Michael Farber, Mark Grant and Sergey Yuzvinsky \cite{farber2007topological}. We have adjusted some notations as required for our needs. 

\begin{thm}(\cite{farber2007topological}, Theorem 6.1 and Theorem 5.1)\label{F-G-Y} One has
\begin{equation} TC(F(\mathbb{R}^2-Q_m,k))=\left\{
              \begin{array}{ll}
                2k-2, & \hbox{if $m=0$.} \\
                2k, & \hbox{if $m=1$.}\\
                2k+1, & \hbox{if $m\geq 2$.}
              \end{array}
            \right.\end{equation}
 and \begin{equation} TC(F(\mathbb{R}^3-Q_m,k))=\left\{
              \begin{array}{ll}
                2k-1, & \hbox{if $m=0$.} \\
                2k+1, & \hbox{if $m\geq 1$.}
              \end{array}
            \right.\end{equation}
\end{thm}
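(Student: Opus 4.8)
The plan is to treat the two dimensions uniformly, deriving a lower bound from the cohomology of $F(\mathbb{R}^n - Q_m, k)$ and an upper bound from its homotopy dimension and connectivity, while handling the obstacle-free case $m=0$ by a direct appeal to Theorem \ref{theor-F-G}. Throughout I would use the homotopy invariance of $TC$ together with the standard cohomological lower bound $\mathrm{zcl}(X) + 1 \leq TC(X)$, where $\mathrm{zcl}(X)$ is the zero-divisor cup length, i.e. the length of the longest nonzero product of classes $\bar a = 1 \otimes a - a \otimes 1 \in H^*(X\times X)$ over a field. For upper bounds I would use the general estimate $TC(X) \leq 2\dim X + 1$, improved when $X$ is $r$-connected to Farber's bound $TC(X) \leq \frac{2\dim X + 1}{r+1} + 1$.

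First I would fix the relevant homotopy models via the Fadell--Neuwirth fibrations $F(\mathbb{R}^n - Q_m, k) \to \mathbb{R}^n - Q_m$ with fibre $F(\mathbb{R}^n - Q_{m+1}, k-1)$. Iterating these shows that for $m\geq 1$ the space $F(\mathbb{R}^2 - Q_m, k)$ is aspherical of homotopy dimension $k$ (each of the $k$ stages $\mathbb{R}^2 - Q_j \simeq \bigvee_j S^1$ has cohomological dimension $1$), whereas $F(\mathbb{R}^3 - Q_m, k)$ is $1$-connected of homotopy dimension $2k$ (each stage $\mathbb{R}^3 - Q_j \simeq \bigvee_j S^2$ is $1$-connected with top cohomology in degree $2$). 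This already yields the upper bounds: in the planar case $TC \leq 2\dim X + 1 = 2k+1$, and in the spatial case the connectivity estimate gives $TC \leq \lfloor (4k+1)/2 \rfloor + 1 = 2k+1$.

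For the lower bounds I would compute the cohomology rings explicitly from the same fibrations (equivalently, from the Orlik--Solomon/Arnold presentations) and exhibit products of zero-divisors realizing the claimed values. In the planar case one expects a nonzero product of $2k$ zero-divisors when $m\geq 2$, forcing $TC \geq 2k+1$ and matching the dimension bound; for $m=1$ the cohomology is one degree poorer and only produces $2k-1$ zero-divisors, giving $TC \geq 2k$. In the spatial case one builds a nonzero product of $2k$ zero-divisors from the degree-$2$ generators, so $TC \geq 2k+1$, matching the connectivity bound. The cases $m=0$ are handled separately: Theorem \ref{theor-F-G} directly gives $2k-2$ in the plane and $2k-1$ in $3$-space.

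The real work, and the main obstacle, lies in two places. First, the cohomology computations must be carried out sharply enough to locate an actual nonzero top product of zero-divisors, not merely to bound Betti numbers; selecting the right coefficient field and the right monomials is the delicate step. Second, the planar case $m=1$ is genuinely borderline: the dimension bound over-counts by one, so to reach the exact value $2k$ I would need a sharper upper bound, either by exploiting the $S^1$-fibration structure of $\mathbb{R}^2 - \{pt\}$ to construct an explicit motion planner with $2k$ local domains, or by a direct sectional-category argument on $F(\mathbb{R}^2 - \{pt\}, k)$. Reconciling this borderline case with the clean $m\geq 2$ and $m=0$ regimes is where I expect the argument to be hardest.
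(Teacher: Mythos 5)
First, note that the paper does not actually prove this statement: it is quoted verbatim from \cite{farber2007topological} (Theorems 5.1 and 6.1), so your proposal can only be measured against the argument in that source. Your outline does reconstruct the right strategy --- zero-divisor cup-length lower bounds combined with dimension/connectivity upper bounds, with $F(\mathbb{R}^n-Q_m,k)$ modelled through the iterated Fadell--Neuwirth fibrations as a $k$-dimensional aspherical complex for $n=2$ and a $1$-connected complex of dimension $2k$ for $n=3$ --- and the numerics of your upper bounds ($2k+1$ in both regimes) are correct. But as written it is a plan, not a proof: the entire content of the theorem for $m\geq 2$ in the plane and $m\geq 1$ in space is the exhibition of a nonzero product of $2k$ zero-divisors in $H^*(X\times X)$, which you explicitly defer. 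In \cite{farber2007topological} this is carried out by hand in the Arnold/Orlik--Solomon presentation of $H^*(F(\mathbb{R}^n-Q_m,k))$, with a specific choice of monomials in the degree-one (resp.\ degree-two) generators, and that computation is where essentially all the work lies; without it you have only the upper bounds.

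Second, the step you single out as hardest --- the planar $m=1$ case --- has a much shorter resolution than the explicit motion planner or sectional-category argument you propose. The translation map $(x_0,x_1,\ldots,x_k)\mapsto (x_0,\,x_1-x_0,\ldots,x_k-x_0)$ is a homeomorphism $F(\mathbb{R}^n,k+1)\cong \mathbb{R}^n\times F(\mathbb{R}^n-Q_1,k)$, so $F(\mathbb{R}^n-Q_1,k)\simeq F(\mathbb{R}^n,k+1)$, and homotopy invariance of $TC$ gives $TC(F(\mathbb{R}^2-Q_1,k))=TC(F(\mathbb{R}^2,k+1))=2(k+1)-2=2k$ and $TC(F(\mathbb{R}^3-Q_1,k))=2(k+1)-1=2k+1$ directly from Theorem \ref{theor-F-G}. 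This disposes of both $m=1$ lines --- upper and lower bound simultaneously --- with no new cohomology computation and no new planner, and removes the ``borderline'' difficulty you flag. I would recommend restructuring your argument so that $m\in\{0,1\}$ is handled entirely by Theorem \ref{theor-F-G} via this homeomorphism, reserving the cup-length computation for $m\geq 2$ (planar) and supplying it in full there.
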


\begin{rem}
Theorem \ref{F-G-Y} is independent of the dimension of $\mathbb{R}^n$ (\cite{gonzalez2015sequential}, Theorem 1.3, pg. 4505), that is, 
\begin{equation}
TC(F(\mathbb{R}^n-Q_m,k))=\left\{
              \begin{array}{ll}
               TC(F(\mathbb{R}^2-Q_m,k)), & \hbox{if $n\geq 2$ even.} \\
            TC(F(\mathbb{R}^3-Q_m,k)), & \hbox{if $n\geq 3$ odd.}
              \end{array}
            \right.
\end{equation} 
\end{rem}

\begin{thm}\label{theor-principal} If $n\geq 2$ even, then
\begin{equation} TC(F(X_{n,m},k))=\left\{
              \begin{array}{ll}
                2k-2, & \hbox{if $m=0$.} \\
                2k, & \hbox{if $m=1$.}\\
                2k+1, & \hbox{if $m\geq 2$.}
              \end{array}
            \right.\end{equation}
 If $n\geq 3$ odd, then \begin{equation} TC(F(X_{n,m},k))=\left\{
              \begin{array}{ll}
                2k-1, & \hbox{if $m=0$.} \\
                2k+1, & \hbox{if $m\geq 1$.}
              \end{array}
            \right.\end{equation}
\end{thm}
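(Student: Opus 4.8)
The plan is to reduce the computation of $TC(F(X_{n,m},k))$ to the already-known value of $TC(F(\mathbb{R}^n-Q_m,k))$ through a chain of homotopy equivalences, after which homotopy invariance of $TC$ and the dimension-independence remark following Theorem \ref{F-G-Y} finish the job. The whole argument is an assembly of the tools developed earlier in the paper rather than a new construction.

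First I would note that $X_{n,m}$ is a connected, compact manifold with nonempty boundary, so Theorem \ref{theor} applies and yields a homotopy equivalence $F(Int(X_{n,m}),k)\simeq F(X_{n,m},k)$. Next, by Proposition \ref{p-1} there is a homeomorphism $\phi:Int(X_{n,m})\xrightarrow{\ \cong\ }\mathbb{R}^n-Q_m$; since the ordered configuration space is functorial under homeomorphisms, applying $\phi$ coordinatewise produces a homeomorphism $F(\phi,k):F(Int(X_{n,m}),k)\xrightarrow{\ \cong\ }F(\mathbb{R}^n-Q_m,k)$. Concatenating these two maps gives $F(X_{n,m},k)\simeq F(\mathbb{R}^n-Q_m,k)$. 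Homotopy invariance of $TC$ (Farber, Theorem 3, cited in the Introduction) then gives $TC(F(X_{n,m},k))=TC(F(\mathbb{R}^n-Q_m,k))$. Finally I would invoke the Remark after Theorem \ref{F-G-Y}: for $n\geq 2$ even this common value equals $TC(F(\mathbb{R}^2-Q_m,k))$, and for $n\geq 3$ odd it equals $TC(F(\mathbb{R}^3-Q_m,k))$; substituting the explicit formulas from Theorem \ref{F-G-Y} reproduces exactly the two case distinctions in the statement.

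The main obstacle is essentially bookkeeping rather than a substantive difficulty: one must check that the hypotheses of each imported result are met at the relevant stage --- in particular that $X_{n,m}$ genuinely satisfies the standing assumptions of Theorem \ref{theor} (connected, compact, nonempty boundary), and that the dimension restrictions $n\geq 2$ (even) and $n\geq 3$ (odd) required by Theorem \ref{F-G-Y} and its remark coincide with the hypotheses imposed here. The one place where a word of care is warranted is the step asserting that the homeomorphism of interiors induces a homeomorphism of configuration spaces; this is immediate from functoriality of $F(-,k)$ under homeomorphisms, but it is the only link in the chain that does not already appear verbatim as a cited lemma. Once these verifications are recorded, the equality of $TC$-values follows formally.
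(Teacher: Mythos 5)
Your proposal is correct and follows exactly the paper's route: the paper's proof is the one-line observation that the theorem follows by combining Theorem \ref{theor}, Proposition \ref{p-1}, and Theorem \ref{F-G-Y} (together with homotopy invariance of $TC$ and the dimension-independence remark), which is precisely the chain you assemble. Your version merely makes explicit the intermediate homeomorphism of configuration spaces and the hypothesis checks that the paper leaves implicit.
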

\begin{proof}
Note that this theorem follows by simply combining Theorems \ref{theor}, \ref{F-G-Y} and Proposition \ref{p-1}.
\end{proof}

\begin{rem}
Theorem \ref{theor-principal} reflects  that the conclusions of the paper \cite{farber2007topological} remain valid in a more general and realistic situations when the obstacles are represented by balls and it is independent of the collisions with the boundary.
\end{rem}

\begin{rem}
As an application of Configuration spaces of hard spheres \cite{yuliy1014mintype},  we believe that Theorem \ref{theor-principal} will remain valid when the objects are represented by small balls and the control requirements are to void tangencies between objects and obstacles.
\end{rem}

\bibliographystyle{plain}

\end{document}